\newtheorem{theorem}{Theorem}
\newtheorem{corollary}{Corollary}
\newtheorem{algorithm}{Algorithm}
\newtheorem{remark}{Remark}
\newcommand{\abs}[1]{\left\vert#1\right\vert}
\newcommand{\bsgamma}{\boldsymbol{\gamma}}
\newcommand{\bsx}{\boldsymbol{x}}
\newcommand{\bsh}{\boldsymbol{h}}
\newcommand{\bsg}{\boldsymbol{g}}
\newcommand{\bsy}{\boldsymbol{y}}
\newcommand{\cH}{{\cal H}}
\newcommand{\icomp}{\mathtt{i}}
\newcommand{\bszero}{\boldsymbol{0}}
\newcommand{\rd}{\,\mathrm{d}}
\newcommand{\NN}{\mathbb{N}}
\newcommand{\ZZ}{\mathbb{Z}}
\newenvironment{proof}{\begin{trivlist}
    \item[\hskip\labelsep{\it Proof.}]}{$\hfill\Box$\end{trivlist}}
\begin{document}

\title{\scshape On a projection-corrected component-by-component construction}

\author{Josef Dick\thanks{The research of J. Dick was supported under Australian Research Council's 
Discovery Projects funding scheme (project number DP150101770).} 
and Peter Kritzer\thanks{P. Kritzer is supported by the Austrian Science Fund (FWF)
Project F5506-N26, which is a part of the Special Research Program "Quasi-Monte Carlo Methods: Theory and Applications".}}

\date{}
\maketitle

\begin{abstract}
The component-by-component construction is the standard method of finding good lattice rules or polynomial lattice 
rules for numerical integration. Several authors have reported that in numerical experiments the generating 
vector sometimes has repeated components. We study a variation of the classical component-by-component algorithm for the construction of 
lattice or polynomial lattice point sets where the components are forced to differ from each other. 
This avoids the problem of having projections where all quadrature points lie on the main diagonal. 
Since the previous results on the worst-case error do not apply to this modified algorithm, we prove such an 
error bound here. We also discuss further restrictions on the choice of components in the component-by-component algorithm.
 
\end{abstract}

{\bf Key words:} Lattice point sets, polynomial lattice point sets, component-by-component algorithm.

{\bf 2010 MSC:} 65D30, 65D32

\section*{Introduction}

Lattice point sets are integration node sets frequently used in quasi-Monte Carlo rules
$$\frac{1}{N} \sum_{n=0}^{N-1} f(\bsx_n) \approx \int_{[0,1]^s} f(\bsx) \rd \bsx$$
for the approximation of $s$-dimensional integrals over the unit cube $[0,1]^s$. 
For a modulus $N$ ($N$ a positive integer) and a generating vector $\boldsymbol{g} = (g_1, \ldots, g_s)\in \{1, 2, \ldots, N-1\}^s$, 
a (rank one) lattice point set is an integration node set of the form
\begin{equation*}
\bsx_n=\left(\left\{\frac{n g_1}{N} \right\}, \ldots, \left\{ \frac{n g_s}{N} \right\} \right), \quad n = 0, 1, \ldots, N-1.
\end{equation*}
Here, for real numbers $x \ge 0$ we write $\{x\} = x - \lfloor x \rfloor$ for the fractional part of $x$. 
For vectors $\bsx$ we apply $\{\cdot \}$ component-wise. A quasi-Monte Carlo rule using a lattice point set is called lattice rule.
For further information on lattice rules we refer to 
\cite{DKS13, N92, SJ94}. 

We consider a weighted Korobov space with general weights as studied in
\cite{DSWW06,NW10}. Before we do so we need to introduce some notation. Let
$\ZZ$ be the set of integers and let $\ZZ_{\ast}=\ZZ \setminus\{0\}$. Furthermore, $\NN$ denotes the set of positive integers. For a set $\mathcal{E}$ we denote by $|\mathcal{E}|$ the cardinality of $\mathcal{E}$.  For $s \in \NN$ we
write $[s]=\{1,2,\ldots,s\}$. For a vector $\bsx=(x_1,\ldots,x_s)\in [0,1]^s$
and for $u \subseteq [s]$ we write $\bsx_u=(x_j)_{j \in u} \in
[0,1]^{|u|}$ and $(\bsx_{u},\bszero)\in [0,1]^s$ for the vector
$(y_1,\ldots,y_s)$ with $y_j=x_j$ if $j \in u$ and $y_j=0$ if $j \not\in
u$. 

The importance of the different components or groups of components of the functions from 
the Korobov space to be defined is specified by a sequence of positive weights 
$\bsgamma=(\gamma_{u})_{u \subseteq [s]}$, see \cite{SW98}, where we may assume that $\gamma_{\emptyset}=1$. 
The smoothness is described by a parameter $\alpha>1$. The weighted Korobov space $\cH(K_{s,\alpha,\bsgamma})$ 
is a reproducing kernel Hilbert space with kernel function of the form 
$$
K_{s,\alpha,\bsgamma}(\bsx,\bsy) = 
1+ \sum_{\emptyset \not=u \subseteq [s]} 
\gamma_{u} \sum_{\bsh_{u}\in \ZZ_{\ast}^{|u|}} \frac{\exp(2 \pi \icomp \bsh_{u}\cdot (\bsx_{u}-\bsy_{u}))}{\prod_{j \in u}|h_j|^{\alpha}}.
$$
It is well known in the theory of lattice rules that it is useful to restrict the range of a generating vector $\bsg$ 
of an $N$-point lattice point set to $\mathcal{Z}_N^s$, where 
\begin{equation*}
\mathcal{Z}_{N} = \{ k \in \{1, 2, \ldots, N-1\}: \gcd(k, N) = 1\}.
\end{equation*}
Furthermore, it is known (see, for example, \cite{DSWW06}) that the squared worst-case 
error of a lattice rule generated by a generating vector $\bsg \in \mathcal{Z}_N^s$ 
in the weighted Korobov space $\cH(K_{s,\alpha,\bsgamma})$ is given by 
\begin{equation}\label{eqerrorexpr}
e^2(\boldsymbol{g}) = \sum_{\substack{ \boldsymbol{h} \in \mathbb{Z}^s \setminus \{ \boldsymbol{0}\} 
\\ \boldsymbol{g} \cdot \boldsymbol{h} \equiv 0 \pmod{N} } } r_\alpha(\boldsymbol{\gamma}, \boldsymbol{h} ),
\end{equation}
where for $\emptyset\neq u\subseteq [s]$ and $\boldsymbol{h}_{u} \in \mathbb{Z}_\ast^{\abs{u}}$ we have
\begin{equation*}
r(\boldsymbol{\gamma}, (\boldsymbol{h}_u, \boldsymbol{0}) ) = \gamma_u \prod_{j \in u} |h_j|^{-\alpha} .
\end{equation*}

It is known that the worst-case error in the Korobov space coincides with the worst-case error in the unanchored Sobolev 
space using the tent-transform \cite{DNP14} and is also related to the mean square worst-case error for randomly shifted lattice rules 
\cite{DKS13}. Hence the results here automatically also apply to those cases.

\section*{The result}

The now standard method for finding good generating vectors for numerical integration in Korobov spaces is the so-called component-by-component 
(CBC) construction (see~\cite{D04,K03}). We can set the first component to $1$ and then proceed inductively by 
choosing one new component at a time by minimizing the error criterion $e^2(g_1^\ast, g_2^\ast,\ldots, g_{d-1}^\ast, g)$ 
as a function of the last (not yet fixed) component $g\in \mathcal{Z}_N$. 
Here, the components $g_1^\ast, g_2^\ast, \ldots, g_{d-1}^\ast$ have been fixed in the previous steps.

It has been observed that in running the CBC construction it may happen 
that components repeat themselves, i.e., there are $i, j \in \{1, \ldots, s\}$ such that $g_i^\ast = g_j^\ast$. We quote from \cite{FK}:
\begin{quote}
\textit{[\ldots] However, it has been observed that the components start to repeat from some dimension onward 
for product-type weights, hence leading to a practical limit on the value of $d$ [we remark that $d$ has the role of $s$ in \cite{FK}]. 
This side effect of the CBC algorithm is yet to be fully understood.}
\end{quote}
This problem may be due to numerical issues of the CBC algorithm, see \cite[p. 386]{NC06}, but this is currently not known. This paper also does not contribute to an understanding of this problem, instead we study a method to avoid its occurrence. Another quote is from \cite{GS15}, where it is stated that:
\begin{quote}
\textit{[\ldots] For large values of the worst-case error, the elements of the generating vector can repeat, 
leading to very bad projections in certain dimensions. }
\end{quote}
To alleviate this problem, Gantner and Schwab~\cite{GS15} introduce a method they call pruning in the CBC algorithm. 
If $g_1^\ast, \ldots, g_{d-1}^\ast$ have already been chosen by the CBC algorithm, 
then they choose the $d$th component from the set $\mathcal{Z}_N \setminus \{g^\ast_1, \ldots, g^\ast_{d-1}\}$, 
which forces the new component to differ from all the previous components.

Following this idea, we study a modified CBC algorithm which excludes all 
values in a set $\mathcal{E}_d \subsetneqq \mathcal{Z}_N$ when choosing the $d$th component. Note that we allow the sets $\mathcal{E}_d$ to depend on the values of $g_1^\ast,\ldots,g_{d-1}^\ast$ for $d\in\{2,\ldots,s\}$.
(As just mentioned, \cite{GS15} considered the special case $\mathcal{E}_d = \{g_1^\ast, \ldots, g_{d-1}^\ast\}$.) 
The standard CBC algorithm can be obtained by setting $\mathcal{E}_2 = \ldots = \mathcal{E}_s = \emptyset$, 
i.e. no components are excluded in the CBC construction. We discuss other sets of exclusions in Section~\ref{sec_soe}.

In the following, we write $\phi$ for Euler's totient function.
\begin{algorithm}\label{alg_lat}
Let $N, s  \in \mathbb{N}$ be given. 
\begin{itemize}
\item[(i)] Set $g^\ast_1 = 1$, and choose $\mathcal{E}_2 \subsetneqq \mathcal{Z}_N$. If no components are to be excluded in coordinate 2, then 
set $\mathcal{E}_2 = \emptyset$.
\item[(ii)] For $d=1, \ldots, s -1$, do the following: assume that $g_1^\ast, \ldots, g_{d}^\ast$ have 
already been found, and choose $\mathcal{E}_{d+1} \subsetneqq \mathcal{Z}_N$. If no components are to be excluded in coordinate $d+1$, then set $\mathcal{E}_{d+1} = \emptyset$.  Find $g_{d+1}^\ast$ as the minimizer $g\in \mathcal{Z}_{N} \setminus \mathcal{E}_{d+1}$ of
\begin{equation*}
e^2(g_1^\ast, \ldots, g_{d}^\ast, g).
\end{equation*}
\end{itemize}
\end{algorithm}
In dimension $d=1$ we do not use any exclusions since in our setting all components yield the same point set.

One can still use the fast CBC method of \cite{NC06b, NC06c} for this approach, with the additional step of checking whether a component is in the set of exclusions (as pointed out in the case of repeated components in \cite[Section~4.2]{GS15}). That is, in component $d$ one needs to perform at most $|\mathcal{E}_d|$ checks for exclusions, hence one needs to perform at most an additional $|\mathcal{E}_2| + \cdots + |\mathcal{E}_{s}| \le (s - 1) (\phi(N)-1) \le s N$ checks. 
This does not increase the overall complexity of the fast CBC algorithm.

Using exclusions in the component-by-component construction (for instance, by forcing new components to differ from the previous ones) implies that the theoretical results on the component-by-component construction as shown, for instance, in \cite{D04,DPW08,K03} do not apply anymore. Hence it has remained an open question as to what theoretical bounds one can get in this case. The following theorem provides an answer to this question. 

For simplicity we assume in the following theorem that the weights are of the form $\gamma_u = \prod_{j \in u} \widetilde{\gamma}_j$. However, it is clear that the result holds for any set of nonnegative numbers $(\gamma_u)_{u \subseteq [s]}$. We write $\zeta(\alpha) = \sum_{h=1}^\infty h^{-\alpha}$ for the Riemann zeta function.

\begin{theorem}\label{thm_main}
Let $N, s \in \mathbb{N}$ be given. Let $\gamma_u = \prod_{j \in u} \widetilde{\gamma}_j$ for nonnegative real numbers $\widetilde{\gamma}_j$. 
Assume that $\boldsymbol{g}^\ast = (g_1^\ast, \ldots, g_s^\ast)$ and sets of exclusions  
$\mathcal{E}_{2},\ldots,\mathcal{E}_{s} \subsetneqq \mathcal{Z}_N$
have been constructed by the algorithm. Then for all $1 \le d \le s$ we have
\begin{equation}\label{eq_main}
e^2(g_1^\ast,\ldots, g_d^\ast) \le \left(\frac{1}{\phi(N)} \sum_{u \subseteq [d] } 
\gamma_u^\lambda (2 \zeta(\alpha \lambda))^{|u|} \prod_{j \in u } \frac{\phi(N)}{\phi(N)- |\mathcal{E}_j|} \right)^{1/\lambda},
\end{equation}
for any $1/\alpha < \lambda \le 1$, where the product over the empty set is defined as $1$.
\end{theorem}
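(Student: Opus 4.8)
The plan is to establish, by induction on $d$, the equivalent statement
\[
\phi(N)\,e^2(g_1^\ast,\dots,g_d^\ast)^\lambda\;\le\;\prod_{j=1}^{d}(1+c_j),\qquad c_j:=2\zeta(\alpha\lambda)\,\widetilde{\gamma}_j^\lambda\,\frac{\phi(N)}{\phi(N)-\abs{\mathcal{E}_j}},
\]
where we set $\abs{\mathcal{E}_1}=0$ (no exclusions in coordinate $1$). Since $\prod_{j=1}^d(1+c_j)=\sum_{u\subseteq[d]}\prod_{j\in u}c_j$ and $\gamma_u=\prod_{j\in u}\widetilde{\gamma}_j$, taking $\lambda$-th roots turns this into \eqref{eq_main}. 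It is convenient to write $r_\alpha(\widetilde{\gamma}_j,h)=1$ for $h=0$ and $r_\alpha(\widetilde{\gamma}_j,h)=\widetilde{\gamma}_j\abs{h}^{-\alpha}$ otherwise, so that $r_\alpha(\bsgamma,\bsh)=\prod_j r_\alpha(\widetilde{\gamma}_j,h_j)$. The base case $d=1$ follows from $e^2(g_1^\ast)=\widetilde{\gamma}_1\cdot 2\zeta(\alpha)N^{-\alpha}$ together with the elementary bounds $2^\lambda\le2$, $\zeta(\alpha)^\lambda\le\zeta(\alpha\lambda)$, and $\phi(N)N^{-\alpha\lambda}\le1$ (the last one because $\alpha\lambda>1$).

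For the step $d-1\to d$, I split the squared error of $(g_1^\ast,\dots,g_{d-1}^\ast,g)$ according to whether its last Fourier coordinate vanishes,
\[
e^2(g_1^\ast,\dots,g_{d-1}^\ast,g)=e^2(g_1^\ast,\dots,g_{d-1}^\ast)+\theta_d(g),
\]
where $\theta_d(g)$ collects the terms with $h_d\neq 0$ and $g_1^\ast h_1+\dots+g_{d-1}^\ast h_{d-1}+gh_d\equiv 0\pmod N$. As the first summand does not depend on $g$, the component $g_d^\ast$ chosen by Algorithm~\ref{alg_lat} also minimises $\theta_d$ over $\mathcal{Z}_N\setminus\mathcal{E}_d$, so by $\min_i a_i\le(M^{-1}\sum_i a_i^\lambda)^{1/\lambda}$ and by enlarging the averaging set from $\mathcal{Z}_N\setminus\mathcal{E}_d$ to $\mathcal{Z}_N$,
\[
\theta_d(g_d^\ast)^\lambda\;\le\;\frac{1}{\phi(N)-\abs{\mathcal{E}_d}}\sum_{g\in\mathcal{Z}_N}\theta_d(g)^\lambda .
\]
Combining this with the subadditivity bound $e^2(g_1^\ast,\dots,g_d^\ast)^\lambda\le e^2(g_1^\ast,\dots,g_{d-1}^\ast)^\lambda+\theta_d(g_d^\ast)^\lambda$ (valid since $\lambda\le1$) and the induction hypothesis, the whole step reduces to the single estimate $\sum_{g\in\mathcal{Z}_N}\theta_d(g)^\lambda\le 2\zeta(\alpha\lambda)\,\widetilde{\gamma}_d^\lambda\prod_{j=1}^{d-1}(1+c_j)$.

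To prove that estimate, bound $\theta_d(g)^\lambda$ by the sum of the $\lambda$-th powers of its summands, interchange the nonnegative sums over $g$ and over $(\bsh_{[d-1]},h_d)$, pull out $\widetilde{\gamma}_d^\lambda$, and split the indices with $N\mid h_d$ from those with $N\nmid h_d$. When $N\mid h_d$ the congruence forces $g_1^\ast h_1+\dots+g_{d-1}^\ast h_{d-1}\equiv 0\pmod N$ and is then met by all $g\in\mathcal{Z}_N$, so this block equals $\widetilde{\gamma}_d^\lambda\,\phi(N)\,2\zeta(\alpha\lambda)N^{-\alpha\lambda}$ times $\sum_{\bsh_{[d-1]}}\prod_{j<d}r_\alpha(\widetilde{\gamma}_j,h_j)^\lambda$ restricted to $g_1^\ast h_1+\dots+g_{d-1}^\ast h_{d-1}\equiv0\pmod N$, which by $\phi(N)N^{-\alpha\lambda}\le1$ is at most $2\zeta(\alpha\lambda)$ times that restricted sum. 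When $N\nmid h_d$, interchange once more with a sum over $g\in\mathcal{Z}_N$: such $g$ is invertible modulo $N$, so for fixed $g$ the congruence fixes $h_d$ in the class $-g^{-1}(g_1^\ast h_1+\dots+g_{d-1}^\ast h_{d-1})\pmod N$, which is nonzero exactly when $g_1^\ast h_1+\dots+g_{d-1}^\ast h_{d-1}\not\equiv0\pmod N$; with $t:=\gcd(g_1^\ast h_1+\dots+g_{d-1}^\ast h_{d-1},N)$, every admissible $h_d$ then has $\gcd(h_d,N)=t$ and is produced by at most $t$ values of $g$, whence the total is $\le t\sum_{k\in\ZZ\setminus\{0\}}\abs{tk}^{-\alpha\lambda}=2\zeta(\alpha\lambda)\,t^{1-\alpha\lambda}\le2\zeta(\alpha\lambda)$. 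The two blocks are supported on the complementary events $g_1^\ast h_1+\dots+g_{d-1}^\ast h_{d-1}\equiv0$ and $\not\equiv0$, so adding them replaces the restricted $\bsh_{[d-1]}$-sums by the full sum $\sum_{\bsh_{[d-1]}\in\ZZ^{d-1}}\prod_{j<d}r_\alpha(\widetilde{\gamma}_j,h_j)^\lambda=\prod_{j=1}^{d-1}\bigl(1+2\zeta(\alpha\lambda)\widetilde{\gamma}_j^\lambda\bigr)\le\prod_{j=1}^{d-1}(1+c_j)$, which gives the estimate and closes the induction.

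The only place where one must go beyond the classical CBC analysis is the count of admissible $g$ when $N\nmid h_d$ and $N$ is composite: the ``at most one such $g$'' of the prime case is false, and replacing it by the crude $\gcd(h_d,N)$ makes the $h_d$-sum diverge unless $\alpha\lambda>2$. I expect this to be the main obstacle. It is resolved by the second interchange of summation, which trades the per-$h_d$ count for the multiplicity $t=\gcd(g_1^\ast h_1+\dots+g_{d-1}^\ast h_{d-1},N)$ of the map $g\mapsto -g^{-1}(g_1^\ast h_1+\dots+g_{d-1}^\ast h_{d-1})\bmod N$ and simultaneously confines $h_d$ to multiples of $t$, so that the factor $t$ is absorbed by the decay $t^{-\alpha\lambda}$. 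Everything else is bookkeeping: the exclusion sets $\mathcal{E}_d$ enter only through the harmless passage from $\mathcal{Z}_N\setminus\mathcal{E}_d$ to $\mathcal{Z}_N$ in the averaging step, which is exactly what produces the factors $\phi(N)/(\phi(N)-\abs{\mathcal{E}_j})$ in \eqref{eq_main}.
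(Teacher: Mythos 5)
Your proof is correct, and its skeleton --- induction on $d$, the decomposition $e^2(g_1^\ast,\ldots,g_{d-1}^\ast,g)=e^2(g_1^\ast,\ldots,g_{d-1}^\ast)+\theta_d(g)$, Jensen's inequality, and an averaging argument over the reduced candidate set $\mathcal{Z}_N\setminus\mathcal{E}_d$ that produces the factors $\phi(N)/(\phi(N)-|\mathcal{E}_d|)$ --- is the same as the paper's. The differences are in execution. First, where the paper phrases the existence of a good non-excluded $g$ via Markov's inequality (choosing $c$ with $\phi(N)(1-1/c)=|\mathcal{E}_d|$ so that the good set minus the exclusions is nonempty), you use the equivalent but more direct bound $\min_i a_i\le (M^{-1}\sum_i a_i^\lambda)^{1/\lambda}$ over $\mathcal{Z}_N\setminus\mathcal{E}_d$ and then enlarge the averaging set to $\mathcal{Z}_N$; this gives exactly the same constant. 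Second, the paper runs the induction at an optimally chosen $\lambda^\ast$ and then argues back to general $\lambda$; your induction at a fixed arbitrary $\lambda$ is simpler and suffices, since there is no cross-$\lambda$ dependence. Third, and most substantively, the paper imports its two central estimates from \cite{D04} (Eq.~(5) there for the splitting of $\theta$ into the $N\mid h_{d+1}$ and $N\nmid h_{d+1}$ blocks, and Lemma~5 there for the bound on the averaged quantity $\overline{\kappa}$), whereas you prove the corresponding bound $\sum_{g\in\mathcal{Z}_N}\theta_d(g)^\lambda\le 2\zeta(\alpha\lambda)\widetilde{\gamma}_d^\lambda\prod_{j<d}(1+2\zeta(\alpha\lambda)\widetilde{\gamma}_j^\lambda)$ from scratch. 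Your gcd double-counting for composite $N$ --- trading the per-$h_d$ solution count for the multiplicity $t=\gcd(\bsh_{[d-1]}\cdot\bsg^\ast,N)$ of the map $g\mapsto -g^{-1}\bsh_{[d-1]}\cdot\bsg^\ast \bmod N$ and absorbing the factor $t$ into the decay $t^{-\alpha\lambda}$ --- is precisely the content of the cited lemma, correctly reconstructed, and it is indeed the one point where the naive prime-modulus argument breaks down. In short: same route, but your version is self-contained where the paper delegates the hard step to a reference.
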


\begin{proof}
We prove the result by induction on $d$. For $d=1$, let $1/\alpha < \lambda \le 1$. Then the result holds since
\begin{equation*}
e^2(g_1^\ast) = \gamma_{\{1\}} \sum_{h \in \mathbb{Z}\setminus \{0\}} |N h|^{-\alpha } = 
\frac{\gamma_{\{1\}}}{N^\alpha } 2 \zeta(\alpha) \le \left(\frac{\gamma_{\{1\}}^\lambda }{\phi(N)} 2 \zeta(\alpha \lambda) \right)^{1/\lambda}.
\end{equation*}

Assume that $\bsg^\ast = (g_1^\ast, \ldots, g_{d}^\ast)$ is chosen according to the algorithm and 
that \eqref{eq_main} holds for $d$ for any choice of $1/\alpha < \lambda \le 1$.

From \eqref{eqerrorexpr} it is easy to deduce that, for $g\in \mathcal{Z}_N$, 
\begin{equation}\label{eqindstep} 
e^2(g_1^\ast,\ldots, g_{d}^\ast, g)=e^2(g_1^\ast, \ldots, g_{d}^\ast)+\theta_{N, d+1, \alpha, \boldsymbol{\gamma} }(g),
\end{equation}
where 
\begin{align}
\theta_{N, d+1, \alpha, \boldsymbol{\gamma} }(g)= & 
\sum_{\substack{ \bsh \in \mathbb{Z}^{d+1},\ h_{d+1} \neq 0 \\  \bsh \cdot (\bsg^\ast, g) \equiv 0 \pmod{N} } } r_\alpha(\bsgamma, \bsh). \label{eq_theta}
\end{align}

From \cite[Eq.~(5)]{D04} (setting $\beta_j = 1$) we obtain
\begin{equation*}
\theta_{N, d+1, \alpha, \boldsymbol{\gamma} }(g)=2 \widetilde{\gamma}_{d+1} \zeta (\alpha) N^{-\alpha} (1 + e^2(g_1^\ast,\ldots, g_d^\ast) ) 
+ \widetilde{\gamma}_{d+1} \kappa_{N,d+1\alpha,\bsgamma}(g),
\end{equation*}
where
\begin{equation}\label{eqkappa}
\kappa_{N,d+1,\alpha,\bsgamma}(g) = \sum_{\substack{ h_{d+1} \in \mathbb{Z} \\ N \nmid h_{d+1} }}\ \ \ \sum_{\substack{ \bsh \in \mathbb{Z}^d \\ 
\bsh \cdot \bsg^\ast \equiv - h_{d+1} g \pmod{N} }} |h_{d+1}|^{-\alpha} r_\alpha(\bsgamma, \bsh).
\end{equation}
Hence we obtain from \eqref{eqindstep}
\begin{equation}\label{eqindstep2} 
e^2(g_1^\ast,\ldots, g_{d}^\ast, g)=(1+2 \widetilde{\gamma}_{d+1} \zeta (\alpha) N^{-\alpha})e^2(g_1^\ast, \ldots, g_{d}^\ast)+ 
2 \widetilde{\gamma}_{d+1} \zeta (\alpha) N^{-\alpha}+\widetilde{\gamma}_{d+1} \kappa_{N,d+1,\alpha,\bsgamma}(g).
\end{equation}

Let now $\lambda^\ast \in(1/\alpha,1]$ be chosen such that the right hand side of \eqref{eq_main} for $d+1$ is minimal for $\lambda^\ast$. 
In the following, we write $ \boldsymbol{\gamma}^{\lambda^*}$ for the weights $\gamma_u^{\lambda^*}=\prod_{j\in u}\widetilde{\gamma}_j^{\lambda^*}$.
We apply Jensen's inequality $( \sum_k a_k )^{\lambda^\ast} \le \sum_k a_k^{\lambda^\ast}$ to \eqref{eqindstep2} to obtain
$$
(e^2(g_1^\ast,\ldots, g_{d}^\ast, g))^{\lambda^*}\le
(1+2^{\lambda^*} \widetilde{\gamma}_{d+1}^{\lambda^*} \zeta (\alpha\lambda^*) N^{-\alpha\lambda^*})
(e^2(g_1^\ast, \ldots, g_{d}^\ast))^{\lambda^*} 
+ 2^{\lambda^*} \widetilde{\gamma}_{d+1}^{\lambda^*} \zeta (\alpha\lambda^*) N^{-\alpha\lambda^*}
+ \widetilde{\gamma}_{d+1}^{\lambda^*} ( \kappa_{N,d+1,\alpha,\bsgamma}(g))^{\lambda^*}.
$$ 
Applying Jensen's inequality to \eqref{eqkappa}, we easily see that
\begin{equation*}
\frac{1}{\phi(N)}  
\sum_{\ell \in \mathcal{Z}_N} (\kappa_{N, d+1, \alpha, \boldsymbol{\gamma} }(\ell))^{\lambda^\ast} 
\le \frac{1}{\phi(N)} \sum_{\ell \in \mathcal{Z}_N} \kappa_{N, d+1, \alpha\lambda^*, \boldsymbol{\gamma}^{\lambda^*} }(\ell)
=:\overline{\kappa}_{N, d+1, \alpha\lambda^*, \boldsymbol{\gamma}^{\lambda^*} }
\end{equation*}
In the following we use ideas similar to \cite{DPW08, DSWW06}. We now use Markov's inequality, 
which states that for a nonnegative random variable $X$ and any real number $c \ge 1$ we have that 
$\mathbb{P}(X < c \mathbb{E}(X)) > 1 -  c^{-1}$. We use the normalized counting measure $\mu$ on $\mathcal{Z}_N$ as the probability measure. 
For $c \ge 1$ let
\begin{align*}
G_c := & \left\{g \in \mathcal{Z}_N: (\kappa_{N, d+1, \alpha, \boldsymbol{\gamma} }(g))^{\lambda^\ast} 
\le c \overline{\kappa}_{N, d+1, \alpha \lambda^\ast, \boldsymbol{\gamma}^{\lambda^\ast} }  \right\} \\ 
\supseteq & \left\{g \in \mathcal{Z}_N: (\kappa_{N, d+1, \alpha, \boldsymbol{\gamma} }(g))^{\lambda^\ast} \le \frac{c}{\phi(N)}  
\sum_{\ell \in \mathcal{Z}_N} (\kappa_{N, d+1, \alpha, \boldsymbol{\gamma} }(\ell))^{\lambda^\ast} \right\} =: A_c.
\end{align*}
Then
\begin{equation*}
\mu(G_c) = \frac{|G_c|}{\phi(N)} \ge \mu(A_c) = \frac{|A_c|}{\phi(N)} > 1 - \frac{1}{c}.
\end{equation*}
In other words, for any $c \ge 1$, there is a subset $G_c \subseteq \mathcal{Z}_N$ of size bigger than 
$\phi(N) (1 - c^{-1})$ such that
\begin{equation*}
(\kappa_{N, d+1, \alpha, \boldsymbol{\gamma} }(g))^{\lambda^\ast} \le  c\overline{\kappa}_{N, d+1, 
\alpha \lambda^\ast, \boldsymbol{\gamma}^{\lambda^\ast}},\ \ \forall g\in G_c.
\end{equation*}
By choosing $c \ge 1$ such that
\begin{equation*}
\phi(N) \left(1-\frac{1}{c} \right) = |\mathcal{E}_d|,
\end{equation*}
it follows that the set $G_c \setminus \mathcal{E}_d$ is not empty. This condition is satisfied for
\begin{equation*}
c = \frac{\phi(N)}{\phi(N) - |\mathcal{E}_d| }.
\end{equation*}
In particular, if $\mathcal{E}_d = \emptyset$ then $c = 1$. As $g_{d+1}^\ast$ is chosen by the algorithm 
such that the error $e^2(g_1^\ast,\ldots, g_{d}^\ast, g)$ is minimized, we obtain 
\begin{eqnarray}\label{eqindstep3} 
(e^2(g_1^\ast,\ldots, g_{d}^\ast, g_{d+1}^\ast))^{\lambda^*}&\le& (1+2^{\lambda^*} \widetilde{\gamma}_{d+1}^{\lambda^*} \zeta (\alpha\lambda^*) N^{-\alpha\lambda^*})
(e^2(g_1^\ast, \ldots, g_{d}^\ast))^{\lambda^*}+ 2^{\lambda^*} \widetilde{\gamma}_{d+1}^{\lambda^*} \zeta (\alpha\lambda^*) N^{-\alpha\lambda^*} +
c \widetilde{\gamma}_{d+1}^{\lambda^*}\overline{\kappa}_{N, d+1, 
\alpha \lambda^\ast, \boldsymbol{\gamma}^{\lambda^\ast}}\nonumber\\
&\le& (1+c2^{\lambda^*} \widetilde{\gamma}_{d+1}^{\lambda^*} \zeta (\alpha\lambda^*) N^{-\alpha\lambda^*})
(e^2(g_1^\ast, \ldots, g_{d}^\ast))^{\lambda^*}+c2^{\lambda^*} \widetilde{\gamma}_{d+1}^{\lambda^*} \zeta (\alpha\lambda^*) N^{-\alpha\lambda^*} +
c \widetilde{\gamma}_{d+1}^{\lambda^*}\overline{\kappa}_{N, d+1, 
\alpha \lambda^\ast, \boldsymbol{\gamma}^{\lambda^\ast}}\nonumber\\
&\le& (1+c2 \widetilde{\gamma}_{d+1}^{\lambda^*} \zeta (\alpha\lambda^*) N^{-\alpha\lambda^*})
(e^2(g_1^\ast, \ldots, g_{d}^\ast))^{\lambda^*}+c2\widetilde{\gamma}_{d+1}^{\lambda^*} \zeta (\alpha\lambda^*) N^{-\alpha\lambda^*} +
c \widetilde{\gamma}_{d+1}^{\lambda^*}\overline{\kappa}_{N, d+1, 
\alpha \lambda^\ast, \boldsymbol{\gamma}^{\lambda^\ast}}. \nonumber\\
\end{eqnarray}
Using the induction assumption with $\lambda=\lambda^\ast$, we obtain
\begin{equation}\label{eq_ind_assump}
 (e^2(g_1^\ast, \ldots, g_{d}^\ast))^{\lambda^*}\le 
\frac{1}{\phi(N)} \sum_{u \subseteq [d] } 
\gamma_u^{\lambda^*} (2 \zeta(\alpha \lambda^*))^{|u|} \prod_{j \in u} \frac{\phi(N)}{\phi(N)- |\mathcal{E}_j|}.
\end{equation}
Furthermore, from the proof of \cite[Lemma~5]{D04} we obtain
\begin{eqnarray}\label{eqkappaest}
\overline{\kappa}_{N, d+1, \alpha\lambda^*, \boldsymbol{\gamma}^{\lambda^*} } &\le& 2 \zeta(\alpha\lambda^*) (1-N^{-\alpha\lambda^*}) \phi(N)^{-1} 
\sum_{\emptyset \neq u \subseteq [d] } \gamma_u^{\lambda^*} (2 \zeta(\alpha\lambda^*) )^{|u|}\nonumber\\
&\le&2 \zeta(\alpha\lambda^*) (1-N^{-\alpha\lambda^*}) \phi(N)^{-1} 
\sum_{\emptyset \neq u \subseteq [d] } \gamma_u^{\lambda^*} (2 \zeta(\alpha\lambda^*) )^{|u|}
\prod_{j \in u } \frac{\phi(N)}{\phi(N)- |\mathcal{E}_j|}.
\end{eqnarray}
Inserting the bounds in \eqref{eq_ind_assump} and \eqref{eqkappaest} into \eqref{eqindstep3}, and noting 
that $N^{-\alpha\lambda^*}\le \phi (N)^{-1}$, we obtain
\begin{eqnarray*}\label{eqerrestimate} 
(e^2(g_1^\ast,\ldots, g_{d}^\ast, g_{d+1}^\ast))^{\lambda^*}&\le&
\frac{1}{\phi(N)} \sum_{u \subseteq [d] } 
\gamma_u^{\lambda^*} (2 \zeta(\alpha \lambda^*))^{|u|} \prod_{j \in u } \frac{\phi(N)}{\phi(N)- |\mathcal{E}_j|}\\
&&+ c2\widetilde{\gamma}_{d+1}^{\lambda^*} \zeta (\alpha\lambda^*) \frac{1}{\phi(N)}\\
&&+ c2\widetilde{\gamma}_{d+1}^{\lambda^*} \zeta (\alpha\lambda^*) \frac{1}{\phi(N)}
\sum_{\emptyset \neq u \subseteq [d] } \gamma_u^{\lambda^*} (2 \zeta(\alpha\lambda^*) )^{|u|}
\prod_{j \in u } \frac{\phi(N)}{\phi(N)- |\mathcal{E}_j|} \\  & \le & \frac{1}{\phi(N)} \sum_{u \subseteq [d+1] } \gamma_u^{\lambda^*} (2 \zeta(\alpha \lambda^*))^{|u|} \prod_{j \in u } \frac{\phi(N)}{\phi(N)- |\mathcal{E}_j|}.
\end{eqnarray*}
This implies the desired error bound for the special case of $\lambda^*$. However, since we chose $\lambda^*$ such that 
the right hand side of \eqref{eq_main} is minimal, the result also holds for arbitrary $\lambda\in (1/\alpha,1]$.
\end{proof}

The following corollary considers the case where the relative size of the set of exclusions is uniformly bounded. 

\begin{corollary}
Let sequences of positive integers $(N_k)_{k\in \mathbb{N}}$ and $(s_k)_{k \in \mathbb{N}}$ 
be given. Let $\boldsymbol{g}^\ast_k = (g_{1,k}^\ast, \ldots, g_{s_k, k}^\ast)$ be constructed by the 
algorithm using $N_k, s_k$ and the set of exclusions $\mathcal{E}_{2}^{(k)}, \ldots, \mathcal{E}_{s_k}^{(k)}$ for $k \in \mathbb{N}$. 
Assume that there is a $0 < \delta < 1$ such that 
\begin{equation}\label{unif_bounded}
\sup_{k \in \mathbb{N}} \frac{ \max_{2 \le d \le s_k} |\mathcal{E}_{d}^{(k)}| }{\phi(N_k)}  \le \delta.
\end{equation}

Then
\begin{equation*}
e^2(\boldsymbol{g}^\ast_k) \le \left(\frac{1}{\phi(N_k)} \sum_{u \subseteq [s_k] } 
\gamma_u^\lambda \left( \frac{2 \zeta(\alpha \lambda)}{1-\delta} \right)^{|u|}  \right)^{1/\lambda},
\end{equation*}
for any $1/\alpha < \lambda \le 1$ and any $k \in \mathbb{N}$.
\end{corollary}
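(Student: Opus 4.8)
The plan is to derive the corollary as an immediate consequence of Theorem~\ref{thm_main}, estimating the exclusion factors uniformly. First I would fix an arbitrary $k \in \mathbb{N}$ and an arbitrary $\lambda$ with $1/\alpha < \lambda \le 1$, and apply the bound \eqref{eq_main} with $d = s_k$, modulus $N_k$, dimension $s_k$, and the exclusion sets $\mathcal{E}_2^{(k)}, \ldots, \mathcal{E}_{s_k}^{(k)}$. This gives
\[
e^2(\boldsymbol{g}^\ast_k) \le \left(\frac{1}{\phi(N_k)} \sum_{u \subseteq [s_k]} \gamma_u^\lambda \, (2 \zeta(\alpha \lambda))^{|u|} \prod_{j \in u} \frac{\phi(N_k)}{\phi(N_k) - |\mathcal{E}_j^{(k)}|} \right)^{1/\lambda}.
\]

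Next I would bound the product over $j \in u$ factorwise. The hypothesis \eqref{unif_bounded} says precisely that $|\mathcal{E}_j^{(k)}| \le \delta\, \phi(N_k)$ for every $2 \le j \le s_k$, while for the coordinate $j = 1$ there is no exclusion set (equivalently $\mathcal{E}_1 = \emptyset$), so the same bound holds trivially there. Since $t \mapsto (1-t)^{-1}$ is increasing on $[0,1)$, we obtain
\[
\frac{\phi(N_k)}{\phi(N_k) - |\mathcal{E}_j^{(k)}|} \le \frac{\phi(N_k)}{\phi(N_k) - \delta\, \phi(N_k)} = \frac{1}{1-\delta}
\]
for each such $j$, and hence $\prod_{j \in u} \phi(N_k)/(\phi(N_k) - |\mathcal{E}_j^{(k)}|) \le (1-\delta)^{-|u|}$ for every $u \subseteq [s_k]$. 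Substituting this into the displayed bound and absorbing the factor $(1-\delta)^{-|u|}$ into the term $(2\zeta(\alpha\lambda))^{|u|}$ yields the asserted inequality. Since $k$ and $\lambda$ were arbitrary, this completes the argument.

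I do not expect any genuine obstacle here: the corollary is simply Theorem~\ref{thm_main} with the exclusion factors estimated uniformly by $(1-\delta)^{-1}$. The only point requiring a moment's care is that the product in \eqref{eq_main} runs over all $j \in u$, which may include the index $j = 1$; one should note that the $j = 1$ factor equals $1 \le (1-\delta)^{-1}$, so the uniform estimate indeed applies to every index in $u$.
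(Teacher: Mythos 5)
Your proposal is correct and matches the intended derivation: the paper states the corollary without proof precisely because it follows from Theorem~\ref{thm_main} by bounding each factor $\phi(N_k)/(\phi(N_k)-|\mathcal{E}_j^{(k)}|)$ by $(1-\delta)^{-1}$ and absorbing $(1-\delta)^{-|u|}$ into $(2\zeta(\alpha\lambda))^{|u|}$, exactly as you do. Your remark about the index $j=1$ (where the factor is trivially $1\le(1-\delta)^{-1}$) is a correct and appropriate point of care.
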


The corollary illustrates that as long as the relative size of the sets of exclusions is uniformly bounded \eqref{unif_bounded},  tractability results are not effected. In other words, if one, for instance, gets strong polynomial QMC tractability using the standard CBC algorithm, one also gets strong polynomial QMC tractability for the modified CBC algorithm using uniformly bounded sets of exclusions.

\begin{remark}{\ }

\begin{itemize}
\item[(i)] Similar results to the theorem and the corollary hold if one considers polynomial lattice rules (cf.~\cite{N92}) instead of lattice rules (using an approach similar to \cite{D07} instead of \cite{D04}).
\item[(ii)] Our method can also be applied to interlaced polynomial lattice rules \cite{G, GD15}. If the aim is to have different components in each coordinate, then due to the interlacing of consecutive coordinates in blocks of length $d$, it is enough to only force different first components of each block. This makes the additional construction cost and the increase in the error bound independent of the interlacing factor $d$.
\end{itemize}
\end{remark}

\section*{Some particular choices for sets of exclusions}\label{sec_soe}

Although the algorithm, the theorem and the corollary apply to arbitrary sets of exclusions, some particular choices are of general interest. We discuss some of them in the following.

\subsection*{Repeated components}

If the aim is simply to avoid repeated components as observed in some numerical experiments, one can simply choose the sets of exclusions
\begin{equation*}
\mathcal{E}_d = \{g_1^\ast, \ldots, g_{d-1}^\ast\}, \quad d = 2, \ldots, s.
\end{equation*}
This makes sure that there are no two-dimensional projections of the integration lattice whose points all lie on the main diagonal $\{(x, x): 0 \le x \le 1\}$.

\subsection*{Avoiding diagonals}

To also exclude having two-dimensional projections where all the points of the integration lattice lie on an antidiagonal $\{(x, 1-x): 0 \le x \le 1\}$, one can additionally exclude the components $N- g_1^\ast, N- g_2^\ast, \ldots, N-g_{d-1}^\ast$. This suggests to use the sets of exclusions
\begin{equation}\label{avoid_diag}
\mathcal{E}_d = \{g_1^\ast, N-g_1^\ast, g_2^\ast, N-g_2^\ast, \ldots, g_{d-1}^\ast, N-g_{d-1}^\ast\}, \quad d = 2, \ldots, s.
\end{equation}
Note that this is only possible as long as $2(s-1) < \phi(N)$. Even so, if, say $|\mathcal{E}_s| = \phi(N) - \ell$ for some small $\ell \in \mathbb{N}$, then the factor $\phi(N)/(\phi(N) - |\mathcal{E}_s|) = \phi(N) / \ell$ becomes large, 
in which case the bound in the theorem becomes meaningless. So one still wants to impose a restriction of the form, say, $\max_{1 \le d \le s} |\mathcal{E}_d| \le \delta \phi(N)$ for some `reasonable choice' (depending on the application) of $\delta < 1$. The last inequality implies that $2 (s-1) \le \delta \phi(N)$.

\subsection*{Avoiding diagonals in smaller dimension}

In some circumstances a condition of the form $2(s-1) \le \delta \phi(N)$ cannot be satisfied. 
For instance, when considering tractability questions one wants to study the dependence on the dimension as $s$ tends to $\infty$ \cite{NW10}. 
Another case in which problems can arise is when $N$ and $s$ need to be increased simultaneously, as for instance in \cite{KSS12} and \cite{DKLNS14}. 
In this case one can, for instance, choose $\mathcal{E}_d$ as in \eqref{avoid_diag} for $d=1, \ldots, s^\ast$ for some fixed $s^\ast$ 
(independent of $N$ and $s$) and set $\mathcal{E}_d = \emptyset$ for $d = s^\ast+1, \ldots, s$. As long as $2(s^\ast-1) \le \delta \phi(N)$, 
the corollary applies since the relative size of the sets of exclusions is uniformly bounded and therefore strong polynomial QMC tractability results can be obtained. The particular choice of $s^\ast$ will depend on the problem under consideration.

\subsection*{Reduced fast CBC}

Another instance where a particular type of sets of exclusions has been considered is \cite{DKLP15}. In this case the aim was somewhat different, namely, to reduce the search space in coordinate $d$ such that one obtains a speed-up of the fast CBC algorithm. In this case, instead of having to do additional computational work, the computational work actually decreases. See \cite{DKLP15} for details.

\bigskip

\noindent {\bf Acknowledgments.} P. Kritzer would like to thank J. Dick and F.Y. Kuo for their hospitality during 
his stay at the University of New South Wales, where this paper was written. The research of J. Dick was supported under Australian Research Council's Discovery Projects funding scheme (project number DP150101770). P. Kritzer is supported by the Austrian Science Fund (FWF) Project F5506-N26, which is a part of the Special Research Program "Quasi-Monte Carlo Methods: Theory and Applications". The authors are very grateful to the reviewers for many helpful comments, in particular, for asking for general sets of exclusions.

\noindent {\bf Authors' addresses:} \\

Josef Dick, School of Mathematics and Statistics, The University of New South Wales, Sydney, 2052 NSW, Australia. e-mail: josef.dick(AT)unsw.edu.au \\

Peter Kritzer, Department of Financial Mathematics and Applied Number Theory, Johannes Kepler University Linz, Altenbergerstr. 69, 
4040 Linz, Austria. e-mail: peter.kritzer(AT)jku.at


\begin{thebibliography}{99}

\bibitem{D04} J.~Dick. On the convergence rate of the component-by-component construction of good lattice rules. J. Complexity 20, 493--522, 2004.

\bibitem{D07} J.~Dick. The construction of extensible polynomial lattice rules with small weighted star discrepancy. Math. Comp. 76, 2077--2085, 2007.

\bibitem{DKLP15} J.~Dick, P.~Kritzer, G.~Leobacher and F.~Pillichshammer. A reduced fast component-by-component construction of lattice points for integration in weighted spaces with fast decreasing weights. J. Comput. Appl. Math. 276, 1--15, 2015.
 
\bibitem{DKLNS14} J.~Dick, F.Y.~Kuo, Q.T.~Le Gia, D.~Nuyens and Ch.~Schwab. Higher order QMC Petrov-Galerkin discretization for affine parametric operator equations with random field inputs. SIAM J. Numer. Anal. 52, 2676--2702, 2014.

\bibitem{DKS13} J.~Dick, F.Y.~Kuo and I.H.~Sloan. High-dimensional integration: the quasi-Monte Carlo way. Acta Numer. 22, 133--288, 2013.

\bibitem{DNP14} J. Dick, D. Nuyens and F. Pillichshammer. Lattice rules for nonperiodic smooth integrands. Numer. Math. 126, 259--291, 2014.

\bibitem{DPW08} J. Dick, F. Pillichshammer and B. J. Waterhouse. The construction of good extensible rank-1 lattices. Math. Comp. 77, 2345--2373, 2008. 

\bibitem{DSWW06} J.~Dick, I.H.~Sloan, X.~Wang and H.~Wo\'zniakowski. Good lattice rules in weighted Korobov spaces with general weights. Numer. Math. 103, 63--97, 2006.

\bibitem{GS15} R. Gantner and Ch. Schwab. Computational higher order quasi-Monte Carlo integration. Submitted, 2014.

\bibitem{G} T. Goda. Good interlaced polynomial lattice rules for numerical integration in weighted Walsh spaces. J. Comput. Appl. Math. 285, 279--294, 2015. 

\bibitem{GD15} T. Goda and J. Dick. Construction of interlaced scrambled polynomial lattice rules of arbitrary high order. To appear in Found. Comput. Math., 2015.

\bibitem{K03} F.Y.~Kuo. Component-by-component constructions achieve the optimal rate of convergence for multivariate integration in weighted Korobov and Sobolev spaces. J. Complexity 19, 301--320, 2003.

\bibitem{FK} F.Y.~Kuo. \url{http://web.maths.unsw.edu.au/~fkuo/lattice/index.html}. Last accessed on 26 May 2015.

\bibitem{KSS12} F.Y.~Kuo, Ch.~Schwab and I.H.~Sloan. Quasi-Monte Carlo finite element methods for a class of elliptic partial differential equations with random coefficients. SIAM J. Numer. Anal. 50, 3351--3374, 2012. 

\bibitem{N92} H.~Niederreiter.  {\it Random Number Generation and Quasi-Monte Carlo Methods.} 
CBMS-NSF Regional Conference Series in Applied Mathematics, 63. Society for Industrial and Applied Mathematics (SIAM), Philadelphia, 1992.

\bibitem{NW10} E.~Novak and H.~Wo\'zniakowski. {\it Tractability of
Multivariate Problems, Volume II: Standard Information for
Functionals}. EMS, Z\"{u}rich, 2010.

\bibitem{NC06b} D. Nuyens and R. Cools. Fast algorithms for component-by-component construction of rank-1 lattice rules in shift-invariant reproducing kernel Hilbert spaces. Math. Comp. 75, 903--920, 2006.

\bibitem{NC06c} D. Nuyens and R. Cools. Fast component-by-component construction of rank-1 lattice rules with a non-prime number of points. J. Complexity 22, 4--28, 2006.

\bibitem{NC06} D. Nuyens and R. Cools. Fast component--by--component construction, a reprise for different kernels. In: H. Niederreiter and D. Talay (eds.), Monte Carlo and quasi--Monte Carlo methods 2004, pp. 373-–387, Springer, Berlin, 2006.

\bibitem{SJ94} I.H.~Sloan and S.~Joe. Lattice methods for multiple integration. Oxford Science Publications. The Clarendon Press, Oxford University Press, New York, 1994.

\bibitem{SW98} I.H.~Sloan and H.~Wo\'zniakowski. When are quasi-Monte Carlo algorithms efficient for high-dimensional integrals? J. Complexity 14, 1--33, 1998.

\end{thebibliography}
\end{document}